%
%

\documentclass[12pt,a4paper,final, dvipsnames,reqno]{amsart}

\input xy
\xyoption{all}

\DeclareMathAlphabet{\mathpzc}{OT1}{pzc}{m}{it}

\usepackage{amsfonts,amsmath,amssymb,indentfirst,mathrsfs,amscd}
\usepackage[mathscr]{eucal}
\usepackage{graphicx}
\usepackage{amsthm}

\usepackage{color}
\usepackage[utf8]{inputenc}
\usepackage[english]{babel}
\usepackage{booktabs}
\usepackage{multirow}
\usepackage{verbatim}
\usepackage{hyperref}
\usepackage{subcaption}

\DeclareMathOperator{\Hom}{Hom\,}           

\usepackage[all]{xy}
\usepackage{graphicx}
\usepackage{stmaryrd}
\usepackage{enumitem}

\usepackage{empheq}
\usepackage{color}
\usepackage{xcolor}


\textwidth 6.5in \oddsidemargin-.2in \evensidemargin-.2in
\parskip.15cm
\baselineskip.55cm
\textheight 710pt
\voffset -1.5cm

\author[\'A. Gonz\'alez-Prieto]{\'Angel Gonz\'alez-Prieto}
\address{Departamento de Matem\'aticas, Facultad de Ciencias, Universidad Aut\'onoma de Madrid. C.\ Francisco Tomás y Valiente, 7, 28049 Madrid, Spain.}
\address{Instituto de Ciencias Matem\'aticas (CSIC-UAM-UC3M-UCM), C.\ Nicol\'as Cabrera 15, 28049 Madrid, Spain}
\email{angel.gonzalezprieto@uam.es}

\author[M. Logares]{Marina Logares}
\address{Facultad de Ciencias Matem\'aticas, Universidad Complutense  de  Madrid, Plaza Ciencias 3, 28040 Madrid Spain.}\email{mlogares@ucm.es}

\author[V. Mu\~{n}oz]{Vicente Mu\~{n}oz}
\address{Departamento de \'Algebra, Geometr\'ia y Topolog\'ia, Facultad de Ciencias, Universidad de M\'alaga, Campus de Teatinos s/n, 29071 Málaga, Spain}
\email{vicente.munoz@uma.es}

\title[Representation variety of torus knots for affine groups]{Motive of the representation varieties of torus knots for low rank affine groups}

\keywords{}


\usepackage{amsmath}
\begin{document}

\newtheorem{theorem}{Theorem}[section]
\newtheorem{proposition}[theorem]{Proposition}
\newtheorem{lemma}[theorem]{Lemma}
\newtheorem{corollary}[theorem]{Corollary}
\newtheorem{conjecture}{Conjecture}
\newtheorem*{theorem*}{Theorem}

\theoremstyle{definition}
\newtheorem{definition}[theorem]{Definition}
\newtheorem{example}[theorem]{Example}
\newtheorem{as}{Assumption}
\theoremstyle{remark}
\newtheorem{remark}[theorem]{Remark}
\theoremstyle{remark}
\newtheorem*{prf}{Proof}

\newcommand{\cA}{\mathcal{A}}
\newcommand{\cC}{\mathcal{C}}
\newcommand{\cD}{\mathcal{D}}
\newcommand{\cE}{\mathcal{E}}
\newcommand{\cF}{\mathcal{F}}
\newcommand{\cG}{\mathcal{G}} 
\newcommand{\cI}{\mathcal{I}} 
\newcommand{\cO}{\mathcal{O}} 
\newcommand{\cM}{\mathcal{M}} 
\newcommand{\cN}{\mathcal{N}} 
\newcommand{\cP}{\mathcal{P}} 
\newcommand{\cQ}{\mathcal{Q}} 
\newcommand{\cS}{\mathcal{S}} 
\newcommand{\cU}{\mathcal{U}} 
\newcommand{\cJ}{\mathcal{J}}
\newcommand{\cX}{\mathcal{X}}
\newcommand{\cT}{\mathcal{T}}
\newcommand{\cV}{\mathcal{V}}
\newcommand{\cW}{\mathcal{W}}
\newcommand{\cB}{\mathcal{B}}
\newcommand{\cR}{\mathcal{R}}
\newcommand{\cH}{\mathcal{H}}
\newcommand{\cZ}{\mathcal{Z}}

\newcommand{\QQ}{\mathbb{Q}} 
\newcommand{\FF}{\mathbb{F}} 
\newcommand{\PP}{\mathbb{P}} 
\newcommand{\HH}{\mathbb{H}} 
\newcommand{\RR}{\mathbb{R}} 
\newcommand{\ZZ}{\mathbb{Z}} 
\newcommand{\NN}{\mathbb{N}} 
\newcommand{\DD}{\mathbb{D}} 
\newcommand{\CC}{\mathbb{C}} 

\newcommand{\too}{\longrightarrow}
\newcommand{\G}{\Gamma}         
\newcommand{\id}{\mathrm{id}}
\newcommand{\Ker}{\textrm{Ker}\,}
\newcommand{\coker}{\textrm{coker}\,}
\newcommand{\x}{\times}
\newcommand\HS[1]{\mathbf{HS}^{#1}}
\newcommand\MHS[1]{\mathbf{MHS}}
\newcommand\Var[1]{\mathbf{Var}_{#1}}
\newcommand\K[1]{\mathrm{K}#1}
\newcommand\AGL[1]{\mathrm{AGL}_{#1}}
\newcommand\GL[1]{\mathrm{GL}_{#1}}
\newcommand\SL[1]{\mathrm{SL}_{#1}}
\newcommand\PGL[1]{\mathrm{PGL}_{#1}}
\newcommand\Rep[1]{\mathfrak{X}_{#1}}
\newcommand\Id{\mathrm{Id}}
\newcommand\Char[1]{\cR_{#1}}
\newcommand{\tr}{\textrm{tr}\,}             
\newcommand\Gr{\textrm{Gr}}
\newcommand{\red}{\mathrm{red}}
\newcommand{\irr}{\mathrm{irr}}

\begin{abstract}
We compute the motive of the variety of representations of the torus knot of type $(m,n)$ into the affine groups
$\AGL{1}(\CC)$ and $\AGL{2}(\CC)$. For this, we stratify the varieties and show that the motives lie
in the subring generated by the Lefschetz motive $q=[\CC]$.
\end{abstract}
\null
\vspace{-1.1cm}
\maketitle

\vspace{-1.5cm}


\section{Introduction}
\let\thefootnote\relax\footnotetext{\noindent \emph{2020 Mathematics Subject Classification}. Primary:
 14C30. 
 Secondary:
 57R56, 
 14L24, 
 14D21. 

\emph{Key words and phrases}: torus knots, character varieties, affine group, Grothendieck ring.}

Since the foundational work of Culler and Shalen \cite{CS}{}, 
the varieties of $\SL{2}(\CC)$-characters have been extensively studied. 
Given a manifold $M$, the variety of representations of
$\pi_1(M)$ into $\SL{2}(\CC)$ and the variety of characters of such representations both
contain information on the topology of $M$. It is especially
interesting for $3$-dimensional manifolds, where the fundamental
group and the geometrical properties of the manifold are 
strongly related. This can be used to study knots $K\subset S^3$, by analysing the
$\SL{2}(\CC)$-character variety of the fundamental group of the knot complement
$S^3-K$ (these are called \emph{knot groups}). 

For a very different reason, the case of fundamental groups of 
surfaces has also been extensively analysed \cite{HT, Hi, LMN, MM, MM2}{}, 
in this situation focusing more on geometrical properties of the moduli
space in itself (cf.\ non-abelian Hodge theory).

Much less is known of the character varieties for other groups. The character varieties for $\SL{3}(\CC)$ for
free groups have been described in \cite{LM}{}. In the case of $3$-manifolds,
little has been done. In this paper, we focus in the case of the torus knots $K_{m,n}$ 
for coprime $m,n$, which are the first family of knots where the computations are
rather feasible. The case of $\SL{2}(\CC)$-character varieties of torus knots was carried
out in \cite{MO, Mu}{}. 
For $\SL{3}(\CC)$, it has been carried out by Mu\~noz and Porti in \cite{MP}{}. 
The case of $\SL{4}(\CC)$ has been computed by two of the authors of the current paper through a computer-assisted proof in \cite{GPM}{}. 

The group $\SL{r}(\CC)$ is reductive, which allows to use Geometric Invariant Theory (GIT) to 
define the moduli of representations, the so-called \emph{character variety}. In \cite{GPLM} we started the analysis of character varieties
for the first non-reductive groups, notably computing by three different methods (geometric, arithmetic and through a Topological Quantum Field Theory)
the motive of the variety of representations for a surface group into the rank one affine group $\AGL{1}(\CC)$. 

In the current work, we study the variety of representations of the torus knot $K_{m,n}$ into the
affine groups $\AGL{1}(\CC)$ and $\AGL{2}(\CC)$. We prove the following result:

\begin{theorem}\label{thm:main}
Let $m,n \in \NN$ with $\gcd(m,n)=1$. The motives of the $\AGL{1}(\CC)$ and $\AGL{2}(\CC)$-representation 
variety of the $(m,n)$-torus knot in the Grothendieck ring of complex algebraic varieties are:
\begin{align*}
	\big[\Rep{m,n}(&\AGL{1}(\CC)) \big] = (mn-m-n+2)(q^2-q). \\
	\big[\Rep{m,n}(&\AGL{2}(\CC))\big] = q^6 - 2q^4 + q^3 + \left[\Rep{m,n}^{\irr}(\GL{2}(\CC))\right]q^2  \\
	&  + (m-1)(n-1)\left((q - 1)(q - 2)\frac{(m-2)(n-2)q+mn-4}{4} + (q + 1) - 2\right)(q^5-q^3) \\
	&  + \frac{(m-1)(n-1)(mn-m-n)}{2}\left(q - 1\right)(q^5-q^3).  
	\end{align*}
Here, $q = [\CC] \in \K{\Var{\CC}}$ denotes the Lefschetz motive, and $\left[\Rep{m,n}^{\irr}(\GL{2}(\CC))\right] =$
 $$
  =\left\{\begin{array}{ll} (q^3-q) \frac14 (m-1)(n-1)(q-2) (q-1),  & m,n \text{ both odd}, \\
   (q^3-q) \left(\frac14 (n-2)(m-1)(q-2) +\frac12 (m-1)(q-1)\right) (q-1), & m \text{ odd, } n \text{ even,} \\ 
  (q^3-q) \left(\frac14 (n-1)(m-2)(q-2) +\frac12 (n-1)(q-1)\right) (q-1), \quad & m \text{ even, } n \text{ odd.} 
  \end{array}\right.
  $$
\end{theorem}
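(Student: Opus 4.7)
The plan is to exploit the standard presentation of the torus knot group, $\pi_1(S^3 \setminus K_{m,n}) = \langle a, b \mid a^m = b^n \rangle$, so that $\Rep{m,n}(G) = \{(A, B) \in G \times G : A^m = B^n\}$ for any algebraic group $G$. Writing elements of $\AGL{r}(\CC) = \GL{r}(\CC) \ltimes \CC^r$ as pairs $A = (A', \vec{a})$, the equation $A^m = B^n$ decouples into a linear-part equation $(A')^m = (B')^n$ in $\GL{r}(\CC)$ and a translation-part equation $\Sigma_m(A')\,\vec{a} = \Sigma_n(B')\,\vec{b}$ in $\CC^r$, where $\Sigma_k(X) = \Id + X + \cdots + X^{k-1}$. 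This sets up a forgetful projection $\Rep{m,n}(\AGL{r}(\CC)) \to \Rep{m,n}(\GL{r}(\CC))$ whose fibers are linear subspaces of $\CC^{2r}$, so the computation reduces to stratifying the base and determining the rank of the translation-part system on each stratum.

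For $G = \AGL{1}(\CC)$ the system becomes $a^m = b^n$ in $\CC^*$ together with $\Sigma_m(a)\alpha = \Sigma_n(b)\beta$ in $\CC$. I would parametrize the curve $\{a^m = b^n\} \subset (\CC^*)^2$ by $t \mapsto (t^n, t^m)$, an isomorphism with $\CC^*$ since $\gcd(m,n)=1$, and split it into the five pieces $t=1$; $t^n=1$ with $t\neq 1$; $t^m=1$ with $t\neq 1$; $t^{mn}=1$ with $t^m\neq 1$ and $t^n\neq 1$; and $t^{mn}\neq 1$, with respective base motives $1$, $n-1$, $m-1$, $(m-1)(n-1)$, and $q-1-mn$. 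On each stratum $\Sigma_m(a)$ and $\Sigma_n(b)$ either both vanish (so $(\alpha,\beta) \in \CC^2$ is free, fiber motive $q^2$) or both do not (so the translations satisfy a single nontrivial linear relation, fiber motive $q$); summing the five contributions collapses to $(mn-m-n+2)(q^2-q)$.

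For $G = \AGL{2}(\CC)$ the fiber of the projection over $(A',B')$ is the kernel of $[\Sigma_m(A') \mid -\Sigma_n(B')]\colon \CC^4 \to \CC^2$. I would stratify the base $\Rep{m,n}(\GL{2}(\CC))$ into: (i) the irreducible locus, where $(A')^m = (B')^n = \lambda\Id$ with $\lambda \neq 1$ forces $\Sigma_m(A')$ and $\Sigma_n(B')$ to be invertible, giving fiber motive $q^2$ and the contribution $[\Rep{m,n}^{\irr}(\GL{2}(\CC))]\,q^2$; (ii) the parabolic stratum of reducible, non-commuting pairs sharing a common flag; and (iii) the diagonal/commuting stratum, including the scalar sub-stratum. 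Each of (ii) and (iii) is further stratified by which diagonal entries of $A'$ are $m$-th roots of unity and which of $B'$ are $n$-th roots of unity, since this controls the rank of $\Sigma_m(A')$ and $\Sigma_n(B')$. The combinatorial counts of such pairs, together with the residual $\GL{2}(\CC)$-conjugation quotient on each stratum, produce the factors $(m-1)(n-1)$ and $(mn-m-n)/2$ and the multipliers $(q^5-q^3)$ and $q^6-2q^4+q^3$ present in the statement.

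The main obstacle is the bookkeeping on the reducible stratum. For a pair $(A',B')$ lying in a common Borel subgroup, the matrices $\Sigma_m(A')$ and $\Sigma_n(B')$ can independently have rank $0$, $1$, or $2$, and their kernels and images may or may not align with the invariant flag, producing several sub-cases for the fiber dimension. The residual Borel conjugation action must be quotiented carefully to avoid double-counting, especially at the boundaries where a parabolic representation degenerates into a diagonal one, and the trichotomy on the parity of $m,n$ appearing in $[\Rep{m,n}^{\irr}(\GL{2}(\CC))]$ reflects the second-order root-of-unity identifications among eigenvalue pairs (pairs $\{\mu,\mu^{-1}\}$ with $\mu^m = \mu^n = \pm 1$). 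A careful inclusion-exclusion in $\K{\Var{\CC}}$ then assembles all these contributions into the claimed polynomial expression.
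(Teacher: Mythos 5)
Your overall framework — decouple $A^m=B^n$ into a linear-part equation in $\GL{r}(\CC)$ and a translation-part linear system, project onto $\Rep{m,n}(\GL{2}(\CC))$, and stratify the base by the rank of that system — is exactly the paper's, and your $\AGL{1}(\CC)$ computation is essentially right (the paper only separates two strata rather than your five, and on two of your middle strata the maps $\Sigma_m(a)$, $\Sigma_n(b)$ do \emph{not} both fail to vanish as you claim, but the fiber is still $\CC$ because one equation survives, so the total is unaffected). This part matches.

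The genuine gap is in the irreducible stratum for $\AGL{2}(\CC)$. You assert that irreducibility forces $(A')^m=(B')^n=\lambda\Id$ with $\lambda\neq 1$, hence that $\Sigma_m(A')$ and $\Sigma_n(B')$ are always invertible and the fiber is uniformly $\CC^2$. Schur's lemma gives $\lambda\Id$, but nothing excludes $\lambda=1$: take $A'$ diagonalizable with two distinct $m$-th roots of unity as eigenvalues and $B'$ diagonalizable with two distinct $n$-th roots of unity as eigenvalues, their eigenlines in general position — then $(A',B')$ is irreducible and $\lambda=1$. Precisely when eigenvalues lie in $\mu_m^*$ (resp. $\mu_n^*$), the operator $\Sigma_m(A')$ (resp.\ $\Sigma_n(B')$) drops rank, and the fiber jumps to $\CC^3$ or $\CC^4$. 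The paper's Section \ref{sec:irred-strat} devotes cases (1)--(4) to exactly these degenerate sub-loci, which produce the extra summand $\frac{(m-1)(n-1)(q^3-2q^2)(q-1)(q^3-q)}{4}\bigl((m-2)(n-2)q+mn-4\bigr)$ on top of $[\Rep{m,n}^{\irr}(\GL{2}(\CC))]q^2$. As written, your proposal reports only $[\Rep{m,n}^{\irr}(\GL{2}(\CC))]q^2$ for the whole irreducible stratum and would yield a wrong total; you need the same root-of-unity substratification on the irreducible locus that you already plan to carry out on the reducible one.
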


\noindent \textbf{Acknowledgements.} The first author was partially supported by MICINN (Spain) grant PID2019-106493RB-I00.
The third author was partially supported by MINECO (Spain) grant PGC2018-095448-B-I00.

\section{Basic notions} 

\subsection{Representation varieties of torus knots}\label{sec:rep-var}

Let $\G$ be a finitely presented group, and let $G$ be a complex algebraic group. 
A \textit{representation} of $\G$ in $G$ is a homomorphism $\rho: \G\to G$.
Consider a presentation $\G=\langle x_1,\ldots, x_k \,|\, r_1,\ldots, r_s \rangle$. Then $\rho$ is completely
determined by the $k$-tuple $(A_1,\ldots, A_k)=(\rho(x_1),\ldots, \rho(x_k))$
subject to the relations $r_j(A_1,\ldots, A_k)=\Id$, $1\leq j \leq s$. The \textit{representation variety} is
 \begin{eqnarray*}
 \Rep{\G}(G) &=& \Hom(\G, G) \\
  &=& \{(A_1,\ldots, A_k) \in G^k \, | \,
 r_j(A_1,\ldots, A_k)=\Id, \,  1\leq j \leq s \}\subset G^{k}\, .
 \end{eqnarray*}
Therefore $\Rep{\G}(G)$ is an affine algebraic set.

Suppose in addition that $G$ is a linear group, say $G \subset \GL{r}(\CC)$. A representation $\rho$ is
\textit{reducible} if there exists some proper 
subspace $V\subset \CC^r$ such that for  all $g\in G$ we have 
$\rho(g)(V)\subset V$;  otherwise $\rho$ is
\textit{irreducible}. This distinction induces a natural stratification of the representation variety into
its irreducible and reducible parts $\Rep{\G}(G) = \Rep{\G}^{\irr}(G) \sqcup \Rep{\G}^{\red}(G)$.

Let $T^2=S^1 \times S^1$ be the $2$-torus and consider the standard embedding
$T^2\subset S^3$. Let $m,n$ be a pair of coprime positive integers. Identifying
$T^2$ with the quotient $\RR^2/\ZZ^2$, the image of the straight line $y=\frac{m}{n}
x$ in $T^2$ defines the \textit{torus knot} of type $(m,n)$, which we shall denote
as $K_{m,n}\subset S^3$ (see Chapter 3 in \cite{Ro}{}).
For a knot $K\subset S^3$, we denote by $\G_K$ the fundamental group of the exterior
$S^3-K$ of the knot. It is known that
 $$
  \G_{m,n}= \G_{K_{m,n}} \cong \langle x,y \, | \, x^n= y^m \,\rangle \,.
 $$
Therefore the variety of representations of the torus knot of type $(m,n)$ is described as
 $$
  \Rep{m,n}(G)=\Rep{\G_{m,n}}(G)=\{ (A,B) \in G^2 \,|\, A^n=B^m\}. 
  $$

In this work, we shall focus on the case $G = \AGL{r}(\CC)$, the group of affine automorphisms of the complex $r$-dimensional affine space.

\subsection{The Grothendieck ring of algebraic varieties}


Take the category of complex algebraic varieties with regular morphisms $\Var{\CC}$.
We can construct its Grothendieck group, $\K{\Var{\CC}}$, as the abelian group generated by
isomorphism classes of algebraic varieties with the relation that $[X]=[Y]+[U]$ if $X=Y\sqcup U$, with $Y\subset X$ a closed subvariety. 
The cartesian product of varieties also provides $\K{\Var{\CC}}$ with a ring structure, as $[X]\cdot [Y]=[X\x Y]$. 
The elements of $\K{\Var{\CC}}$ are usually referred to as virtual classes.
A very important element of $\K{\Var{\CC}}$
is the class of the affine line, $q = [\CC]$, the so-called \emph{Lefschetz motive}. 


Virtual classes are well-behaved with respect to two typical geometric situations that we will encounter in the upcoming sections. A proof of the
following facts can be found for instance in Section 4.1 of \cite{GP:2018b}{}.
\begin{itemize}
	\item Let $E \to B$ be a regular morphism that is a locally trivial bundle in the Zariski topology with fiber $F$. In this situation, we have
	that in $\K\Var{\CC}$
	$$
		[E]=[F]\cdot [B].
	$$
	\item Suppose that $X$ is an algebraic variety with an action of $\ZZ_2$.
Setting $[X]^+ = [X / \ZZ_2]$ and $[X]^- = [X] - [X]^+$, we have the formula
  \begin{equation}\label{eqn:2bis}
 [X \times Y]^{+} = [X]^+[Y]^+ + [X]^-[Y]^- 
  \end{equation}
for two varieties $X,Y$ with $\ZZ_2$-actions. 
\end{itemize}

\begin{example}\label{ex:calculations-kvar}
Consider the fibration $\CC^2 - \CC \to \GL{2}(\CC) \to \CC^{2}-\{(0,0)\}$, $f \mapsto f(1,0)$. It is locally trivial in the Zariski topology,
and therefore $[\GL{2}(\CC)]=[\CC^2-\CC]\cdot[\CC^{2}-\{(0,0)\}] = (q^2-q)(q^2-1) = q^4-q^3-q^2+q$. Analogously, the quotient map defines a
locally trivial fibration $\CC^* = \CC - \{0\} \to \GL{2}(\CC) \to \PGL{2}(\CC)$, so $[\PGL{2}(\CC)] = q^3-q$.
\end{example}

We have the following computation that we will need later.

\begin{lemma}\label{lem;ac}
Let $\ZZ_2$ act on $\CC^2$ by exchange of coordinates. Then 
$[(\CC^*)^2 - \Delta]^+ = (q-1)^2$, $[(\CC^*)^2 - \Delta]^- = -q+1$, where $\Delta$ denotes the diagonal.

Also let $X=\GL{2}(\CC) / \GL{1}(\CC) \times \GL{1}(\CC)$, and $\ZZ_2$ acting by exchange of columns in $\GL{2}(\CC)$.
Then $[X]^+=q^2$ and $[X]^-=q$.
\end{lemma}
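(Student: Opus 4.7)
\medskip

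\noindent\textbf{Proof plan.} For the first statement, the plan is to note that the $\ZZ_2$-action on $(\CC^*)^2-\Delta$ is free (the diagonal is exactly the fixed locus), so I can identify the quotient concretely via elementary symmetric functions $(x,y)\mapsto(s,p)=(x+y,xy)$. The image of $(\CC^*)^2$ is $\{(s,p)\in\CC\times\CC^*\}$ (non-vanishing of $p$ encodes $x,y\neq 0$), and removing the diagonal corresponds to removing the discriminant locus $\{s^2=4p\}$. Since on $p\neq 0$ the discriminant is parametrized by $s\in\CC^*$ via $p=s^2/4$, it contributes a class $q-1$. Hence
\[
 [(\CC^*)^2-\Delta]^+ = [\CC\times\CC^*]-[\CC^*] = q(q-1)-(q-1) = (q-1)^2.
\]
For the minus part, I would use $[(\CC^*)^2-\Delta]=(q-1)(q-2)$ (subtracting the diagonal $\CC^*$ from the torus) and solve $[(\CC^*)^2-\Delta]^- = (q-1)(q-2)-(q-1)^2 = -(q-1)$.

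For the second statement, the idea is to identify $X=\GL{2}(\CC)/(\GL{1}(\CC)\times\GL{1}(\CC))$ geometrically. The right action of the diagonal torus rescales columns independently, so $X$ parametrizes ordered pairs of linearly independent lines through the origin in $\CC^2$; that is, $X\cong (\PP^1\times\PP^1)\setminus\Delta$. This yields $[X]=(q+1)^2-(q+1)=q^2+q$. The $\ZZ_2$-action by swapping columns corresponds to permuting the two lines.

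The quotient $X/\ZZ_2$ is therefore the configuration space of unordered pairs of distinct points in $\PP^1$, that is, $\mathrm{Sym}^2(\PP^1)\setminus\Delta$. I would then invoke the classical identification $\mathrm{Sym}^2(\PP^1)\cong\PP^2$, under which the diagonal maps isomorphically to a smooth conic $C\cong\PP^1$. Consequently,
\[
 [X]^+ = [\PP^2]-[\PP^1] = (q^2+q+1)-(q+1) = q^2,
\]
and $[X]^- = [X]-[X]^+ = (q^2+q)-q^2 = q$, as claimed.

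The only mildly non-routine point is justifying the geometric model $X\cong(\PP^1\times\PP^1)\setminus\Delta$ and the identification of the symmetric square of $\PP^1$ with $\PP^2$ sending the diagonal to a conic; everything else is a straightforward class computation in $\K{\Var{\CC}}$ using the scissor relations and the free action. I do not anticipate any serious obstacle, since the two $\ZZ_2$-actions in question are free on the loci considered.
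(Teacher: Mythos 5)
Your proof is correct and follows essentially the same approach as the paper: parametrizing $\CC^2/\ZZ_2$ by the elementary symmetric functions $(s,p)=(x+y,xy)$ and removing the discriminant locus for the first part, and identifying $X\cong(\PP^1\times\PP^1)\setminus\Delta$ with $\mathrm{Sym}^2(\PP^1)\cong\PP^2$ carrying the diagonal to a conic for the second.
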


\begin{proof}
 The quotient $\CC^2/\ZZ_2$ is parametrized by $s=x+y$, $p=xy$, where $(x,y)$ are the coordinates of $\CC^2$. Then
 $((\CC^*)^2-\Delta)/\ZZ_2$ is given by the equations $p\neq 0$, $4p\neq s^2$. Therefore 
 $[(\CC^*)^2 - \Delta]^+ =[((\CC^*)^2-\Delta)/\ZZ_2]=q^2-q - (q-1)=(q-1)^2$, and  
 $[(\CC^*)^2 - \Delta]^-= [(\CC^*)^2 - \Delta] - [(\CC^*)^2 - \Delta]^+ =(q-1)^2-(q-1)- (q-1)^2=-q+1$.
 
 For the second case, note that $X=\PP^1\x\PP^1-\Delta$, and $\ZZ_2$ acts by exchange of coordinates. The whole quotient is
 $(\PP^1\x \PP^1)/\ZZ_2=\mathrm{Sym}^2(\PP^1)=\PP^2$. The diagonal goes down to a smooth conic (the completion of
 $4p=s^2$), hence $[X]^+=[X/\ZZ_2]=[(\PP^1\x \PP^1-\Delta)/\ZZ_2]=q^2+q+1-(q+1)=q^2$. Also $[X]=(q+1)^2-(q+1)=q^2+q$, 
 hence $[X]^-=q$.
\end{proof}

\section{$\AGL{1}(\CC)$-representation varieties of torus knots}\label{sec:AGL1}

In this section we shall compute the motive of the $\AGL{1}(\CC)$-representation variety of the 
$(m,n)$-torus knot by describing it explicitly. Suppose that we have an element $(A, B) \in \Rep{m,n}(\AGL{1}(\CC))$ with matrices of the form
$$
	A = \begin{pmatrix}
	1 & 0 \\
	\alpha & a_0
	\end{pmatrix}, \qquad B = \begin{pmatrix}
	1 & 0 \\
	\beta & b_0
	\end{pmatrix}.
$$

A straightforward computation shows that
$$
	A^n = \begin{pmatrix}
	1 & 0 \\
	(1 + a_0 + \ldots+ a^{n-1}_0)\alpha & a^n_0
	\end{pmatrix}, \qquad B^m = \begin{pmatrix}
	1 & 0 \\
	(1 + b_0 + \ldots +b^{m-1}_0)\beta & b^m_0
	\end{pmatrix}.
$$
Notice that, since $\gcd(m,n) = 1$, for any pair $(a_0,b_0) \in \CC^2$ with $a^n_0 = b^m_0$ and $a_0,b_0 \neq 0$, there exists a unique
$t \in \CC^* = \CC-\left\{0\right\}$ such that $t^m = a_0$ and $t^n = b_0$. This means that the representation variety can be explicitly described as
$$
\Rep{m,n}(\AGL{1}(\CC)) = \left\{(t, \alpha, \beta) \in \CC^* \times \CC^2 \,\left|\, \Phi_n(t^m)\alpha = \Phi_m(t^n)\beta\right.\right\},
$$
where $\Phi_l$ is the polynomial
 $$
 \Phi_l(x) = 1 + x + \ldots + x^{l-1} = \frac{x^l-1}{x-1} \in \CC[x].
 $$ 

Written in a more geometric fashion, the morphism $(t, \alpha, \beta) \mapsto t$ defines a regular map
\begin{align}\label{eq:fibration-agl1}
	\Rep{m,n}(\AGL{1}(\CC)) \stackrel{}{\longrightarrow} \CC^*.
\end{align}
The fiber over $t \in \CC^*$ is the annihilator of the vector $(\Phi_n(t^m), \Phi_m(t^n)) \in \CC^2$ (in other words, the orthogonal
complement respect to the standard euclidean metric). This annihilator is $\CC$ if $(\Phi_n(t^m), \Phi_m(t^n)) \neq (0,0)$ and is $\CC^2$ otherwise.

Denote by $\mu_l$ the group of $l$-th roots of units. Recall that the roots of the polynomial $\Phi_l$ are the elements of 
$\mu_l^*=\mu_l-\{1\}$.
Hence $(\Phi_n(t^m), \Phi_m(t^n)) = (0,0)$ if and only if 
 $$
 t  \in \Omega_{m,n}=\mu_{mn} - \left(\mu_m \cup \mu_n\right). 
 $$
The number of elements of $\Omega_{m,n}$ is
 $$
 |\Omega_{m,n}| = mn-m-n+1=(m-1)(n-1).
 $$

The  space  (\ref{eq:fibration-agl1}) decomposes into the two Zariski locally trivial fibrations
\begin{align*}
	&\CC \longrightarrow \Rep{m,n}^{(1)}(\AGL{1}(\CC)) \longrightarrow \CC^* - \Omega_{m,n}, \\ 
	&\CC^2 \longrightarrow \Rep{m,n}^{(2)}(\AGL{1}(\CC)) \longrightarrow \Omega_{m,n},
\end{align*}
with $\Rep{m,n}(\AGL{1}(\CC))  = \Rep{m,n}^{(1)}(\AGL{1}(\CC)) \sqcup \Rep{m,n}^{(2)}(\AGL{1}(\CC))$. 
This implies that the motive of the whole representation variety is
\begin{align*}
	\left[\Rep{m,n}(\AGL{1}(\CC)) \right] &= \left[\Rep{m,n}^{(1)}(\AGL{1}(\CC)) \right] + \left[\Rep{m,n}^{(2)}(\AGL{1}(\CC)) \right] \\ 
	&= \left[\CC^* - \Omega_{m,n}\right]\left[\CC\right] + \left[\Omega_{m,n}\right]\left[\CC^2\right] \\
	& = (q-1 - |\Omega_{m,n}|)q + |\Omega_{m,n}|q^2 \\
	& = (mn-m-n+2)(q^2-q).
\end{align*}
This proves the first assertion of Theorem \ref{thm:main}.

\section{$\AGL{2}(\CC)$-representation varieties of torus knots}\label{sec:AGL2}

In this section, we compute the motive of the $\AGL{2}(\CC)$-representation variety of the $(m,n)$-torus knot. 
Suppose that we have an element $(A, B) \in \Rep{m,n}(\AGL{2}(\CC))$ with matrices of the form
$$
	A = \begin{pmatrix}
	1 & 0 \\
	\alpha & A_0
	\end{pmatrix}, \qquad B = \begin{pmatrix}
	1 & 0 \\
	\beta & B_0
	\end{pmatrix}.
$$
Notice that in this setting $A_0, B_0 \in \GL{2}(\CC)$ while $\alpha, \beta \in \CC^2$. Computing the powers we obtain
$$
	A^n = \begin{pmatrix}
	1 & 0 \\
	\Phi_n(A_0)\alpha & A^n_0
	\end{pmatrix}, \qquad B^m = \begin{pmatrix}
	1 & 0 \\
	\Phi_m(B_0)\beta & B^m_0
	\end{pmatrix}.
$$

Therefore, the $\AGL{2}(\CC)$-representation variety is explicitly given by
\begin{equation}\label{eq:AGL2-rep}
\Rep{m,n}(\AGL{2}(\CC)) = \left\{(A_0, B_0, \alpha, \beta) \in \GL{2}(\CC)^2 \times \CC^2 \,\left|\, \begin{matrix} A_0^n = B_0^m \\
\Phi_n(A_0)\alpha = \Phi_m(B_0)\beta \end{matrix}\right.\right\},
\end{equation}

In particular, these conditions imply that $(A_0, B_0) \in \Rep{m,n}(\GL{2}(\CC))$. Let us decompose 
$$
 \Rep{m,n}(\AGL{2}(\CC)) = \Rep{m,n}^{\irr}(\AGL{2}(\CC)) \sqcup \Rep{m,n}^{\red}(\AGL{2}(\CC)),
 $$
where $\Rep{m,n}^{\irr}(\AGL{2}(\CC))$ (resp.\ $\Rep{m,n}^{\red}(\AGL{2}(\CC))$) are the representations $(A, B)$ with $(A_0, B_0)$ an
irreducible (resp.\ reducible) representation of $\Rep{m,n}(\GL{2}(\CC))$.

\begin{remark}
Beware of the notation: the superscripts refer to the reducibility/irreducibility of the vectorial part of the representation, not to
the representation itself. 
\end{remark}

\subsection{The irreducible stratum}\label{sec:irred-strat}

First of all, let us analyze the case where $(A_0, B_0)$ is an irreducible representation. In that case, the eigenvalues are restricted as the following result shows.

\begin{lemma}
Let $\rho = (A_0, B_0) \in \Rep{m,n}^{\irr}(\GL{r}(\CC))$ be an irreducible representation. Then $A_0^n = B_0^m = \omega\, \mathrm{Id}$, 
for some $\omega \in \CC^*$.
\end{lemma}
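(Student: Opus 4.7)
The plan is to invoke Schur's lemma. Set $C = A_0^n = B_0^m$, which is well-defined by the defining relation of $\Rep{m,n}(\GL{r}(\CC))$. I would first observe that $C$, being a power of $A_0$, commutes with $A_0$, and likewise $C$, being a power of $B_0$, commutes with $B_0$. Since $A_0$ and $B_0$ are the images of the two generators $x,y$ of $\G_{m,n}$, they generate the image of $\rho$ inside $\GL{r}(\CC)$, so $C$ lies in the centralizer of $\rho(\G_{m,n})$.

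Next I would apply Schur's lemma in its standard form: if $\rho$ is irreducible (i.e., there is no proper invariant subspace), then any endomorphism of $\CC^r$ commuting with every element of $\rho(\G_{m,n})$ must be a scalar multiple of the identity. This is the usual eigenspace argument — if $C$ had an eigenspace $V_\lambda \subsetneq \CC^r$, then $V_\lambda$ would be $\rho$-invariant, contradicting irreducibility. Hence $C = \omega\,\mathrm{Id}$ for some $\omega \in \CC$.

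Finally, $\omega \in \CC^*$ because $C = A_0^n$ is the product of invertible matrices (recall $A_0, B_0 \in \GL{r}(\CC)$), so $\det C = (\det A_0)^n \neq 0$. This gives $A_0^n = B_0^m = \omega\,\mathrm{Id}$ as required.

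No serious obstacle is expected here — the statement is a direct application of Schur's lemma to the element $A_0^n = B_0^m$, with the only mild subtlety being to notice that commutativity with both generators is what is needed, not any deeper structure of the torus knot relation.
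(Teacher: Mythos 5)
Your proof is correct and follows essentially the same route as the paper: observe that $A_0^n = B_0^m$ commutes with both generators (i.e., is $\rho$-equivariant) and apply Schur's lemma to conclude it is a scalar matrix. The only additions — spelling out the eigenspace argument and noting $\omega \neq 0$ from invertibility — are details the paper leaves implicit.
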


\begin{proof}
Notice that $A^n_0$ is a linear map that is equivariant with respect to the representation $\rho$. By Schur's lemma, this implies that
$A_0^n$ must be a multiple of the identity, say $A_0^n = \omega \, \mathrm{Id}$ and, since $B_0^m = A_0^n$, the result follows.
\end{proof}

\begin{corollary}\label{cor;1}
Let $\rho = (A_0, B_0) \in \Rep{m,n}^{\irr}(\GL{r}(\CC))$ be an irreducible representation and let $\lambda_1, \ldots, \lambda_r$ and
$\eta_1, \ldots, \eta_r$ be the eigenvalues of $A_0$ and $B_0$, respectively. Then $A_0$ and $B_0$ are diagonalizable and
$\lambda_1^n = \ldots = \lambda_r^n = \eta_1^m = \ldots = \eta_r^m$.
\end{corollary}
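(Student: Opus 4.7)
The plan is to invoke the preceding lemma, which gives $A_0^n = B_0^m = \omega\,\mathrm{Id}$ for some $\omega \in \CC^*$, and then derive both diagonalizability and the eigenvalue equalities from this single identity.

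First I would observe that $A_0$ satisfies the polynomial $p(x) = x^n - \omega \in \CC[x]$. Since $\omega \neq 0$, the polynomial $p$ is separable (its derivative $nx^{n-1}$ shares no root with $p$ because $0$ is not a root of $p$), so $p$ has $n$ distinct roots in $\CC$, namely the $n$-th roots of $\omega$. Consequently, the minimal polynomial of $A_0$ divides $p$, and therefore splits into distinct linear factors; this is equivalent to $A_0$ being diagonalizable. The same argument with $B_0^m = \omega\,\mathrm{Id}$ shows $B_0$ is diagonalizable.

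Next, for the eigenvalue statement, I would pick any eigenvalue $\lambda_i$ of $A_0$ with eigenvector $v \neq 0$. Then
\[
\lambda_i^n\, v = A_0^n v = \omega\, v,
\]
so $\lambda_i^n = \omega$ for every $i$. Identically, $\eta_j^m = \omega$ for every $j$. Hence
\[
\lambda_1^n = \cdots = \lambda_r^n = \omega = \eta_1^m = \cdots = \eta_r^m,
\]
which is exactly the desired chain of equalities.

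There is no real obstacle here: the whole statement is a direct corollary of the lemma combined with the elementary fact that a matrix annihilated by a separable polynomial is diagonalizable. The only subtlety worth flagging is ensuring $\omega \neq 0$ so that $x^n - \omega$ is separable in characteristic zero; this is guaranteed because $A_0 \in \GL{r}(\CC)$, so $A_0^n$ is invertible and $\omega \neq 0$ automatically.
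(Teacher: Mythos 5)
Your proof is correct and matches the argument the paper leaves implicit: the corollary is stated without proof as an immediate consequence of the preceding lemma, and the only natural derivation is exactly yours — $A_0^n = \omega\,\mathrm{Id}$ with $\omega \neq 0$ forces $A_0$ to satisfy the separable polynomial $x^n - \omega$, hence diagonalizability, and every eigenvalue must be an $n$-th root of $\omega$ (likewise for $B_0$ with $m$). Nothing to add.
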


In order to analyze the conditions of (\ref{eq:AGL2-rep}), observe that $(A, B) \mapsto (A_0, B_0)$ defines a morphism
\begin{equation}\label{eq:map-fibration}
	\Rep{m,n}^{\irr}(\AGL{2}(\CC)) \longrightarrow \Rep{m,n}^{\irr}(\GL{2}(\CC)).
\end{equation}
The fiber of this morphism at $(A_0, B_0)$ is the kernel of the map
\begin{equation}\label{eq:map-lambda}
	\Lambda: \CC^2 \times \CC^2 \to \CC^2, \quad \Lambda(\alpha, \beta) = \Phi_n(A_0)\alpha - \Phi_m(B_0)\beta.
\end{equation}

The following appears in Proposition 7.3 in \cite{MP}{}. Recall from Example \ref{ex:calculations-kvar} that $[\PGL{2}(\CC)]=q^3-q$.

\begin{proposition} \label{prop:GL2}
For the torus knot of type $(m,n)$, we have:
\begin{itemize}
\item If $m,n$ are both odd then
$[\Rep{m,n}^{\irr}(\GL{2}(\CC))] = [\PGL{2}(\CC)] \frac14 (m-1)(n-1)(q-2) (q-1)$.
\item If $n$ is even and $m$ is odd, then

$[\Rep{m,n}^{\irr}(\GL{2}(\CC))] =  [\PGL{2}(\CC)] \left(\frac14 (n-2)(m-1)(q-2) +\frac12 (m-1)(q-1)\right) (q-1)$.
\item If $m$ is even and $n$ is odd, then

$[\Rep{m,n}^{\irr}(\GL{2}(\CC))] =  [\PGL{2}(\CC)] \left(\frac14 (n-1)(m-2)(q-2) +\frac12 (n-1)(q-1)\right) (q-1)$.
\end{itemize}
\end{proposition}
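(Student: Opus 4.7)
The plan is to exploit Schur's lemma, which gives a free $\PGL{2}(\CC)$-conjugation action on $\Rep{m,n}^{\irr}(\GL{2}(\CC))$, and to observe (via explicit sections normalizing $A_0$ to diagonal form on open strata) that the resulting $\PGL{2}(\CC)$-principal bundle over the character variety $\mathcal{M}^{\irr}$ is Zariski-locally trivial, so that $[\Rep{m,n}^{\irr}(\GL{2}(\CC))]=[\PGL{2}(\CC)]\cdot[\mathcal{M}^{\irr}]$. By Corollary~\ref{cor;1}, any irreducible $(A_0,B_0)$ satisfies $A_0^n=B_0^m=\omega\,\Id$ with both matrices diagonalizable with distinct eigenvalues. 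I would uniformize the ordered eigenvalue data by $(t,\zeta_n,\zeta_m)\in\CC^*\times\mu_n^*\times\mu_m^*$, setting $\lambda_1=t^m$, $\lambda_2=\zeta_n t^m$, $\eta_1=t^n$, $\eta_2=\zeta_m t^n$ (a bijection because $\gcd(m,n)=1$), where $\mu_\ell^*:=\mu_\ell-\{1\}$.

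After conjugating so that $A_0=\mathrm{diag}(\lambda_1,\lambda_2)$, the matrix $B_0$ is determined by its two eigenlines in $\PP^1-\{[1:0],[0:1]\}$, and modding out by the residual stabilizer $T/Z\cong\CC^*$ reduces these to a single ratio $r\in\CC^*-\{1\}$ of motive $q-2$ (equivalent to the cross-ratio of the four eigenlines). Hence $\mathcal{M}^{\irr}$ is the quotient of $P:=\CC^*\times\mu_n^*\times\mu_m^*\times(\CC^*-\{1\})$ by the commuting pair of involutions $s_A$ (Weyl conjugation: $\zeta_n\mapsto\zeta_n^{-1}$, $t\mapsto\xi_A t$ with $\xi_A\in\mu_n$ the unique element with $\xi_A^m=\zeta_n$, and $r\mapsto 1/r$) and $s_B$ (relabeling of $\eta_1\leftrightarrow\eta_2$: $\zeta_m\mapsto\zeta_m^{-1}$, $t\mapsto\xi_B t$ with $\xi_B\in\mu_m$ the unique element with $\xi_B^n=\zeta_m$, and $r\mapsto 1/r$). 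I would then stratify $P$ by $\ZZ_2\times\ZZ_2$-orbits of the discrete components $(\zeta_n,\zeta_m)\in\mu_n^*\times\mu_m^*$. The condition $\gcd(m,n)=1$ ensures that $\zeta_n=-1$ (requires $n$ even) and $\zeta_m=-1$ (requires $m$ even) cannot both hold, so only two orbit types appear: generic \emph{type-$4$} orbits, where both $\zeta_n,\zeta_m\neq-1$ and the four components are freely permuted (with quotient represented by one copy of $\CC^*\times(\CC^*-\{1\})$ of motive $(q-1)(q-2)$), and special \emph{type-$2$} orbits, in which exactly one $\zeta_\bullet=-1$ and the two components are swapped by one generator while the other generator stabilizes each component and acts internally by $(t,r)\mapsto(-t,1/r)$.

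For the type-$2$ quotient I would invoke the formula $[X\times Y]^+=[X]^+[Y]^++[X]^-[Y]^-$ from Section~2, computing $[\CC^*]^+=q-1$ and $[\CC^*]^-=0$ for the $t$-factor (since $\CC^*/(t\mapsto -t)\cong\CC^*$ via $t\mapsto t^2$), and $[\CC^*-\{1\}]^+=q-1$ with $[\CC^*-\{1\}]^-=-1$ for the $r$-factor (since the geometric quotient by $r\mapsto 1/r$ is $\CC-\{2\}$ via $r+r^{-1}$). This yields $(q-1)^2$ per type-$2$ orbit. A direct count then gives $(m-1)(n-1)/4$ type-$4$ orbits (and no type-$2$) when $m,n$ are both odd, $(n-2)(m-1)/4$ type-$4$ plus $(m-1)/2$ type-$2$ when $n$ is even and $m$ is odd, and the symmetric count when $m$ is even and $n$ is odd. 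Summing the contributions and multiplying by $[\PGL{2}(\CC)]=q^3-q$ recovers the three formulas in the statement. The main obstacle I anticipate is the careful analysis of the type-$2$ stratum: the negative class $[\CC^*-\{1\}]^-=-1$ arises because the geometric quotient of the cross-ratio variety under inversion is \emph{larger} than a naive fixed-point count would suggest, and this must combine with the vanishing $[\CC^*]^-=0$ to yield precisely the $(q-1)^2$ contribution that accounts for the discrepancy between the different parity cases.
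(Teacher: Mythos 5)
Your proposal is correct, and since the paper itself does not prove Proposition~\ref{prop:GL2} but simply cites Proposition~7.3 of Mu\~noz--Porti~\cite{MP}, your argument supplies a self-contained proof that the paper leaves implicit. The strategy --- parametrizing the ordered eigenvalue data of an irreducible pair by $(t,\zeta_n,\zeta_m)\in\CC^*\times\mu_n^*\times\mu_m^*$ with $\lambda_i=\zeta_n^{i-1}t^m$, $\eta_i=\zeta_m^{i-1}t^n$, reducing the eigenline data to a single cross-ratio $r\in\CC^*-\{1\}$, and then quotienting by the $\ZZ_2\times\ZZ_2$ of relabellings --- is the right one, and your orbit count (no type-$2$ orbits when $m,n$ are both odd, $\frac{m-1}{2}$ of them when $n$ is even and $m$ odd, and symmetrically) together with the computation $[\CC^*]^+=q-1$, $[\CC^*]^-=0$, $[\CC^*-\{1\}]^+=q-1$, $[\CC^*-\{1\}]^-=-1$ does recover the three formulas exactly. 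I verified that your involutions $s_A$, $s_B$ really do commute (the ambiguity $t\mapsto\xi_A t$, $t\mapsto\xi_B t$ lives in $\mu_n$, $\mu_m$ respectively, and $\xi_A(\zeta_n^{-1})\xi_A(\zeta_n)=1$ since $\gcd(m,n)=1$), and that $\xi_A=-1$ in the type-$2$ stratum when $n$ is even and $m$ odd, which is what makes the product formula for $[X\times Y]^\pm$ applicable.

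The one place where the argument is stated more loosely than it should be is the opening claim that you obtain Zariski-local triviality of the $\PGL{2}(\CC)$-principal bundle ``via explicit sections normalizing $A_0$ to diagonal form on open strata.'' Normalizing $A_0$ to diagonal form requires choosing an order of its two eigenvalues, and no such choice is regular over any Zariski-open of $\mathcal{M}^{\irr}$; the naive diagonal section lives only on the $\ZZ_2\times\ZZ_2$-cover $P$, and transported to $\PGL{2}(\CC)\times P$ the relabelling involution acts by right translation by the Weyl element on the $\PGL{2}(\CC)$ factor, not trivially. The conclusion $[\Rep{m,n}^{\irr}(\GL{2}(\CC))]=[\PGL{2}(\CC)]\,[\mathcal{M}^{\irr}]$ is nevertheless correct: on the type-$2$ strata one can either modify the section to make it $\ZZ_2$-equivariant (for instance $h(t,r)=\left(\begin{smallmatrix} t & -t \\ 1 & 1\end{smallmatrix}\right)$ intertwines $t\mapsto -t$ with right multiplication by $w$), or observe directly that $[\PGL{2}(\CC)]^-=0$ for the right-translation-by-$w$ action (since $[\GL{2}(\CC)/\ZZ_2]=[\GL{2}(\CC)]$ via the fibration $\{v_1,v_2\}\mapsto v_1+v_2$ over $\CC^2-\{0\}$), so the correction term in $[X\times Y]^+=[X]^+[Y]^++[X]^-[Y]^-$ vanishes. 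Either way the factorization holds, but this step deserves to be spelled out rather than asserted, as it is precisely where a $\PGL{n}$-torsor argument can silently go wrong.
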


To understand the kernel of (\ref{eq:map-lambda}), we use the following lemma.

\begin{lemma}
Let $A$ be a diagonalizable matrix and let $p(x) \in \CC[x]$ a polynomial. Then, the dimension of the kernel of the matrix $p(A)$
is the number of eigenvalues of $A$ that are roots of $p(x)$.
\end{lemma}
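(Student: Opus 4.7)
The plan is to reduce the claim to the diagonal case via a change of basis. Since $A$ is diagonalizable, write $A = PDP^{-1}$ with $D = \mathrm{diag}(\lambda_1,\ldots,\lambda_r)$ listing the eigenvalues of $A$ according to their (algebraic = geometric) multiplicity. Substituting into any polynomial yields $p(A) = P\, p(D)\, P^{-1}$, so $p(A)$ and $p(D)$ are conjugate and in particular their kernels have the same dimension.

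Next I would observe that $p(D) = \mathrm{diag}(p(\lambda_1),\ldots,p(\lambda_r))$ is itself diagonal. The kernel of a diagonal matrix is the span of those standard basis vectors whose diagonal entry vanishes, so
\[
\dim \ker p(D) \;=\; \#\{\,i : p(\lambda_i) = 0\,\},
\]
that is, the number of eigenvalues of $A$ (counted with multiplicity) that happen to be roots of $p(x)$. Combining with the previous step gives the asserted equality for $\ker p(A)$.

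There is really no obstacle here; the only point worth flagging is the convention on how eigenvalues are counted. The statement should be (and is used in the sequel) with multiplicities: a double eigenvalue $\lambda$ of $A$ that is a root of $p$ contributes $2$ to $\dim\ker p(A)$, reflecting the two-dimensional eigenspace on which $p(A)$ vanishes. This is exactly what the diagonalization argument produces, and it is the form in which the lemma will be applied to $\Phi_n(A_0)$ and $\Phi_m(B_0)$ in~(\ref{eq:map-lambda}) via Corollary~\ref{cor;1}.
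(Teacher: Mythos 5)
Your proof is correct and follows the same diagonalization argument as the paper: conjugate $A$ to a diagonal matrix $D$, note $p(A)$ is conjugate to $p(D)=\mathrm{diag}(p(\lambda_1),\ldots,p(\lambda_r))$, and read off the kernel dimension. The extra remark about counting eigenvalues with multiplicity is a sensible clarification but does not change the substance.
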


\begin{proof}
Write $A = QDQ^{-1}$ with $D = \textrm{diag}(\lambda_1, \ldots, \lambda_r)$ a diagonal matrix. Then $p(A) = Qp(D)Q^{-1}$ and,
since $p(D) = \textrm{diag}(p(\lambda_1), \ldots, p(\lambda_r))$, 
the dimension of its kernel is the number of eigenvalues that are also roots of $p$.
\end{proof}

Using the previous lemma for $r=2$, we get that the dimension of the kernel of $\Phi_n(A_0)$ is the number of eigenvalues of $A_0$
that belong to $\mu_n^*$, and analogously for $\Phi_m(B_0)$. Let $\lambda_1, \lambda_2$ be the eigenvalues of $A_0$ and $\eta_1, \eta_2$
the eigenvalues of $B_0$. Recall that $\lambda_1 \neq \lambda_2$ and $\eta_1 \neq \eta_2$ since otherwise $(A_0, B_0)$ is not irreducible.
Then, we have the following options:

\begin{enumerate}

\item Case $\lambda_1, \lambda_2 \in \mu_n^*$ and $\eta_1, \eta_2 \in \mu_m^*$. In this situation, $\Lambda \equiv 0$ so
$\Ker{\Lambda} = \CC^4$. Hence, if we denote by $\Rep{m,n}^{\irr,(1)}(\AGL{2}(\CC))$ and $\Rep{m,n}^{\irr,(1)}(\GL{2}(\CC))$ 
the corresponding strata in (\ref{eq:map-fibration}) of the total and base space, respectively, we have that
	$$
 \left[\Rep{m,n}^{\irr,(1)}(\AGL{2}(\CC))\right] = \left[\Rep{m,n}^{\irr,(1)}(\GL{2}(\CC))\right][\CC^4].
	$$	
To get the motive of $\Rep{m,n}^{\irr,(1)}(\GL{2}(\CC))$, the eigenvalues define a fibration
\begin{equation}\label{eq:eigenvalues}
	\Rep{m,n}^{\irr,(1)}(\GL{2}(\CC)) \longrightarrow ((\mu_n^*)^2 - \Delta)/\ZZ_2 \times ((\mu_m^*)^2 - \Delta)/\ZZ_2,
\end{equation}
where $\Delta$ is the diagonal and $\ZZ_2$ acts by permutation of the entries. The fiber of this map is the collection of
representations $(A_0, B_0) \in \Rep{m,n}^{\irr}(\GL{2})$ with fixed eigenvalues, denoted by $\Rep{m,n}^{\irr}(\GL{2}(\CC))_{0}$. An element of $\Rep{m,n}^{\irr}(\GL{2}(\CC))_{0}$ 
is completely determined by the two pairs of eigenspaces of $(A_0, B_0)$ up to conjugation. Since the representation $(A_0, B_0)$
must be irreducible, these eigenspaces must be pairwise distinct. Hence, this variety is
$\Rep{m,n}^{\irr}(\GL{2}(\CC))_{0} = (\PP^1)^4 - \Delta_c$,
 where $\Delta_c \subset (\PP^1)^4$ is the `coarse diagonal' of tuples with two repeated entries. There is a free and closed action of $\PGL{2}(\CC)$ on $(\PP^1)^4$ with quotient 
	$$
 \frac{(\PP^1)^4 - \Delta_c}{\PGL{2}(\CC)} = \PP^1-\{0,1,\infty\}.
	$$ 
To see this, note that there is a $\PGL{2}(\CC)$-equivariant map that sends the first three entries to
$0,1,\infty \in \PP^1$ respectively, so the orbit is completely determined by the image of the fourth point under this map. 
Hence, $[\Rep{m,n}^{\irr}(\GL{2}(\CC))_{0}] = [\PP^1-\{0,1,\infty\}]\, [\PGL{2}(\CC)]=(q-2)(q^3-q)$. 
	
Coming back to the fibration (\ref{eq:eigenvalues}), we have that the basis is a set of 
$\binom{n-1}{2}\binom{m-1}{2}=\frac{(n-1)(n-2)(m-1)(m-2)}{4}$ points, so
	$$
 \left[\Rep{m,n}^{\irr,(1)}(\GL{2}(\CC))\right] = \frac{(n-1)(n-2)(m-1)(m-2)}{4}(q-2)(q^3-q),
	$$
and thus,
	$$
 \left[\Rep{m,n}^{\irr,(1)}(\AGL{2}(\CC))\right] = \frac{(n-1)(n-2)(m-1)(m-2)}{4}(q^5-2q^4)(q^3-q).
	$$
	
\item Case $\lambda_1, \lambda_2 \in \mu_n^*$, $\eta_1 \in \mu_m^*$ and $\eta_2 = 1$. In this case, $\Ker{\Lambda} = \CC^3$
and the base space is made of $\binom{n -1}{2} (m-1)$ copies of $\Rep{m,n}^{\irr}(\GL{2}(\CC))_{0}$. 
Hence, this stratum contributes
 \begin{align*}
 \left[\Rep{m,n}^{\irr,(2)}(\AGL{2}(\CC))\right] &= \frac{(n-1)(n-2)(m-1)}{2}\left[\PP^1-\left\{0,1,\infty\right\}\right]\, [\PGL{2}(\CC)]\,[\CC^3] \\
 &= \frac{(n-1)(n-2)(m-1)}{2}(q^4-2q^3)(q^3-q).
	\end{align*}

\item Case $\lambda_1 \in \mu_n^*$, $\lambda_2 = 1$ and $\eta_1,\eta_2 \in \mu_m^*$. This is analogous to the previous stratum and contributes
 \begin{align*}
 \left[\Rep{m,n}^{\irr,(3)}(\AGL{2}(\CC))\right] &= \frac{(m-1)(n-1)(m-2)}{2}\left[\PP^1-\left\{0,1,\infty\right\}\right]\, [\PGL{2}(\CC)]\, [\CC^3] \\
 &= \frac{(m-1)(n-1)(m-2)}{2}(q^4-2q^3)(q^3-q).
 \end{align*}

\item Case $\lambda_1 \in \mu_n^*$, $\lambda_2 = 1$ and $\eta_1 \in \mu_m^*$, $\eta_2 = 1$. Now, $\Ker{\Lambda} = \CC^2$ and this stratum contributes 
 \begin{align*}
  \left[\Rep{m,n}^{\irr,(4)}(\AGL{2}(\CC))\right] &= (m-1)(n-1)\left[\PP^1-\left\{0,1,\infty\right\}\right]\, [\PGL{2}(\CC)]\,[\CC^2] \\
  &= (m-1)(n-1)(q^3-2q^2)(q^3-q).
 \end{align*}

\item Case $\lambda_1 \not\in \mu_n^*, \lambda_2 \not\in \mu_n^*, \eta_1 \not\in \mu_m^*$ and $\eta_2 \not\in \mu_m^*$.
Recall that by Corollary \ref{cor;1}, these conditions are all equivalent.
In this situation, $\Lambda$ is surjective so $\Ker{\Lambda} = \CC^2$. The motive 
$\left[\Rep{m,n}^{\irr}(\GL{2}(\CC))\right]$ is given in Proposition \ref{prop:GL2}.
To this space, we have to remove the orbits corresponding to the forbidden eigenvalues, which are 
  \begin{align*}
 \ell_{m,n} =& \, \frac{(n-1)(n-2)(m-1)(m-2)}{4} + \frac{(n-1)(n-2)(m-1)}{2} \\ &+ \frac{(m-1)(n-1)(m-2)}{2} + (m-1)(n-1) =
\frac14 mn(m-1)(n-1) 
 \end{align*} 
 copies of $[\Rep{m,n}^{\irr}(\GL{2}(\CC))_{0}] = [\PP^1-\{0,1,\infty\}]\, [\PGL{2}(\CC)]$. Hence this stratum contributes
  \begin{align*}
 \left[\Rep{m,n}^{\irr,(5)}(\AGL{2}(\CC))\right]  =& \left(\left[\Rep{m,n}^{\irr}(\GL{2}(\CC))\right] - \ell_{m,n}(q-2)(q^3-q)\right)\left[\CC^2\right]
  \\
  =& \left[\Rep{m,n}^{\irr}(\GL{2}(\CC))\right] q^2-  \frac14 mn(m-1)(n-1) (q^3-2q^2)(q^3-q).
 \end{align*} 

\end{enumerate}

Adding up all the contributions, we get 
\begin{align*}
	\left[\Rep{m,n}^{\irr}(\AGL{2}(\CC))\right] =& \left[\Rep{m,n}^{\irr,(1)}(\AGL{2}(\CC))\right] + \left[\Rep{m,n}^{\irr,(2)}(\AGL{2}(\CC))\right]
	+ \left[\Rep{m,n}^{\irr,(3)}(\AGL{2}(\CC))\right] \\
	& + \left[\Rep{m,n}^{\irr,(4)}(\AGL{2}(\CC))\right] + \left[\Rep{m,n}^{\irr,(5)}(\AGL{2}(\CC))\right]  \\
	= & \frac{(m-1)(n-1)(q^3-2q^2)(q-1)(q^3-q)}{4}  \left( (m-2)(n-2)q+mn-4 \right) \\ &
+\left[\Rep{m,n}^{\irr}(\GL{2}(\CC))\right] q^2 .
\end{align*}

\subsection{The reducible stratum}

In this section, we shall consider the case in which $(A_0, B_0) \in \Rep{m,n}^{\red}(\GL{2}(\CC))$ 
is a reducible representation. 
After a change the basis, since $A_0^n = B_0^m$, we can suppose that $(A_0, B_0)$ has exactly one of the following three forms:
 $$
 \textrm{(A)} \left(\begin{pmatrix}t_1^m & 0 \\ 0 & t_2^m\end{pmatrix}, \begin{pmatrix}t_1^n & 0 \\ 0 & t_2^n\end{pmatrix}\right), 
   \textrm{(B)} \left(\begin{pmatrix}t^m & 0 \\ 0 & t^m\end{pmatrix}, \begin{pmatrix}t^n & 0 \\ 0 & t^n\end{pmatrix}\right),
 \textrm{(C)} \left(\begin{pmatrix}t^m & 0 \\ x & t^m\end{pmatrix}, \begin{pmatrix}t^n & 0 \\ y & t^n\end{pmatrix}\right),
 $$
with $t_1, t_2, t \in \CC^*$, $x, y \in \CC$ and satisfying $t_1 \neq t_2$ and $(x,y) \neq (0,0)$.

Restricting to the representations of each stratum $S = (\textrm{A}), (\textrm{B}), (\textrm{C})$, 
we have a morphism
 \begin{equation}\label{eq:map-fibration-A}
 \Rep{m,n}^{S}(\AGL{2}(\CC)) \longrightarrow \Rep{m,n}^{S}(\GL{2}(\CC)),
 \end{equation}
whose fiber is the kernel of the linear map (\ref{eq:map-lambda}).

\subsubsection{Case \textrm{(A)}}\label{sec:red-A}
In this case, as for the irreducible part of Section \ref{sec:irred-strat}, the kernel of $\Lambda$ depends on whether $t_1, t_2$ are
roots of the polynomial $\Phi_l$. In this case the base space is
 $$
 \Rep{m,n}^{ \textrm{(A)}}
 (\GL{2}(\CC)) = \left(\left((\CC^*)^2 - \Delta\right) \times \frac{\GL{2}(\CC)}{\GL{1}(\CC) \times \GL{1}(\CC)}\right)/\ZZ_2,
 $$
with the action of $\ZZ_2$ given by exchange of eigenvalues and eigenvectors. Using Lemma \ref{lem;ac} and (\ref{eqn:2bis}), 
we have
 \begin{align*}
 \left[\Rep{m,n}^{\mathrm{(A)}}(\GL{2})\right] &=
 [(\CC^*)^2 - \Delta]^+ 
  \left[\frac{\GL{2}(\CC)}{\GL{1}(\CC) \times \GL{1}(\CC)}\right]^+ 
  + [(\CC^*)^2 - \Delta]^-
   \left[\frac{\GL{2}(\CC)}{\GL{1}(\CC) \times \GL{1}(\CC)}\right]^- \\
 &= q^2(q-1)^2 - q(q-1).
\end{align*}

On the other hand, if we fix the eigenvalues of $(A_0, B_0)$ as in Section \ref{sec:irred-strat}, 
the corresponding fiber $\Rep{m,n}^{\mathrm{(A)}}(\GL{2}(\CC))_0$ is
$$
 \left[\Rep{m,n}^{\mathrm{(A)}}(\GL{2}(\CC))_0\right] = \left[\frac{\GL{2}(\CC)}{\GL{1}(\CC) \times \GL{1}(\CC)}\right] = q^2 + q.
$$

As in Section \ref{sec:AGL1}, set $\Omega_{m,n} = \mu_{mn} - (\mu_{m} \cup \mu_{n})$ for those 
$t \in \CC^*$ such that $\Phi_n(t^m) = 0$ and $\Phi_m(t^n) = 0$. With this information at hand, we compute for each stratum:

\begin{enumerate}
 \item Case $t_1, t_2 \in \Omega_{m,n}$. In this situation, $\Lambda \equiv 0$ so $\Ker{\Lambda} = \CC^4$. 
  The eigenvalues yield a fibration
\begin{equation*}
	\Rep{m,n}^{\mathrm{(A)},(1)}(\GL{2}(\CC)) \longrightarrow \left(\Omega_{m,n}^2 - \Delta\right)/\ZZ_2
\end{equation*}
whose fiber is $\Rep{m,n}^{\mathrm{(A)}}(\GL{2}(\CC))_0$. Observe that $\left(\Omega_{m,n}^2 - \Delta\right)/\ZZ_2$ 
is a finite set of $(m-1)(n-1)((m-1)(n-1)-1)/2$ points, so we have
\begin{align*}
	\left[\Rep{m,n}^{\mathrm{(A)},(1)}(\AGL{2}(\CC))\right] &= \left[\Rep{m,n}^{\mathrm{(A)},(1)}(\GL{2}(\CC))\right][\CC^4] \\
	&= \left[\Rep{m,n}^{\mathrm{(A)}}(\GL{2}(\CC))_0\right][\CC^4]\left[\left(\Omega_{m,n}^2 - \Delta\right)/\ZZ_2\right] \\
	&= \frac{(m-1)(n-1)(mn-m-n)}{2}q^4(q^2+q).
\end{align*}

\item Case $t_1 \in \Omega_{m,n}$ but $t_2 \not\in \Omega_{m,n}$ (or vice-versa, the order is not important here).
Now, we have a locally trivial fibration
$$
 \Rep{m,n}^{\mathrm{(A)},(2)}(\GL{2}(\CC)) \longrightarrow \Omega_{m,n} \times \left(\CC^*-\Omega_{m,n}\right),
$$
with fiber $\Rep{m,n}^{\mathrm{(A)}}(\GL{2}(\CC))_0$. The kernel of $\Lambda$ is $\CC^3$, so this stratum contributes
\begin{align*}
 \left[\Rep{m,n}^{\mathrm{(A)},(2)}(\AGL{2}(\CC))\right] = (m-1)(n-1)(q - mn+n+m-2) q^3(q^2+q).
\end{align*}

\item Case $t_1, t_2 \not\in\Omega_{m,n}$. The kernel is now $\CC^2$ and we have a fibration
$$
 \Rep{m,n}^{\mathrm{(A)},(3)}(\GL{2}(\CC)) \longrightarrow B,
$$
where the motive of the base space $B$ is 
\begin{align*}
 [B] &=[(\CC^*)^2-\Delta]^+  - \left[\Omega_{m,n}^2 - \Delta\right]^+ 
- \left[\Omega_{m,n}\right] \left(q-1-[\Omega_{m,n}]\right) =\\
 &=  (q-1)^2 -\frac{(m-1)(n-1)(mn-m-n)}{2}- (m-1)(n-1)(q-mn+n+m-2) \\
 &= q^2 - (mn-m-n+3)q -\frac14 (m-1)(n-1)(mn-8).
\end{align*}
Therefore, this space contributes
\begin{align*}
 \big[\Rep{m,n}^{\mathrm{(A)},(3)}&(\AGL{2}(\CC))\big] = 
 \left[\Rep{m,n}^{\mathrm{(A)},(3)}(\GL{2}(\CC))\right][\CC^2]  \\
 &= q^2(q^2+q)\left(q^2 - (mn-m-n+3)q+ (m-1)(n-1)(mn-8)/4\right).
\end{align*}
\end{enumerate} 

Adding up all the contributions, we get that

\begin{align*}
 \left[\Rep{m,n}^{\mathrm{(A)}}(\AGL{2}(\CC))\right] =\; &
 (q^2+q)q^2\bigg(\frac{(m-1)(n-1)(mn-m-n)}{2}(q^2-1) \\
 & + (m-1)(n-1)(q - mn+n+m-2) (q-1) + (q-1)^2\bigg).
 \end{align*}

\subsubsection{Case \textrm{(B)}}\label{sec:red-B} In this setting, this situation is simpler. Observe that the adjoint
action of $\GL{2}(\CC)$ on the vectorial part is trivial, so the corresponding $\GL{2}(\CC)$-representation variety is just
$$
	\Rep{m,n}^{\mathrm{(B)}}(\GL{2}(\CC)) = \CC^*.
$$
Analogously, the variety with fixed eigenvalues, $\Rep{m,n}^{\mathrm{(B)}}(\GL{2}(\CC))_0$ is just a point.
With these, we obtain that:
\begin{enumerate}
\item If $t \in \Omega_{m,n}$, then $\Ker{\Lambda} = \CC^4$. We have a fibration
\begin{equation*}
 \Rep{m,n}^{\mathrm{(B)},(1)}(\GL{2}(\CC)) \longrightarrow \Omega_{m,n}
\end{equation*}
whose fiber is $\Rep{m,n}^{\mathrm{(B)}}(\GL{2}(\CC))_0$. Hence, this stratum contributes
$$
 \left[\Rep{m,n}^{\mathrm{(B)},(1)}(\AGL{2}(\CC))\right] =
  \left[\Rep{m,n}^{\mathrm{(B)},(1)}(\GL{2}(\CC))\right][\CC^4] = (m-1)(n-1)q^4\, .
$$

\item If $t \not\in \Omega_{m,n}$, then $\Ker{\Lambda} = \CC^2$. We have a fibration
\begin{equation*}
 \Rep{m,n}^{\mathrm{(B)},(2)}(\GL{2}(\CC)) \longrightarrow \CC^*-\Omega_{m,n}.
\end{equation*}
Thus, the contribution of this stratum is
$$
 \left[\Rep{m,n}^{\mathrm{(B)},(2)}(\AGL{2}(\CC))\right] = (q-1-(m-1)(n-1))q^2.
$$
\end{enumerate}
The total contribution is
 $$
  \left[\Rep{m,n}^{\mathrm{(B)}}(\AGL{2}(\CC))\right] = (m-1)(n-1)(q^4-q^2)+(q-1)q^2.
$$

\subsubsection{Case (C)}\label{sec:red-C} 
In this case, an extra calculation must be done to control the off-diagonal entry. If $(A_0, B_0)$ has the form
$$ \left(\begin{pmatrix}t^m & 0 \\ x & t^m\end{pmatrix}, \begin{pmatrix}t^n & 0 \\ y & t^n\end{pmatrix}\right),
$$
then the condition $A_0^n = B_0^m$ reads as
$$
 \begin{pmatrix}t^{mn} & 0 \\ nt^{m(n-1)}x & t^{mn}\end{pmatrix} = \begin{pmatrix}t^{mn} & 0 \\ mt^{n(m-1)}y & t^{mn}\end{pmatrix}.
$$
The later conditions reduce to $nt^{m(n-1)}x = mt^{n(m-1)}y$ and, since $t \neq 0$, this means that $(x,y)$ should lie in a
line minus $(0,0)$. The stabilizer of a Jordan type matrix in $\GL{2}(\CC)$ is the subgroup $U = (\CC^*)^2 \times \CC \subset \GL{2}(\CC)$
of upper triangular matrices. Hence, the corresponding $\GL{2}(\CC)$-representation variety is
$$
	\Rep{m,n}^{\mathrm{(C)}}(\GL{2}(\CC)) = (\CC^*)^2 \times \GL{2}(\CC)/U.
$$
In particular, $\left[\Rep{m,n}^{\mathrm{(C)}}(\GL{2}(\CC))\right] 
= (q-1)^2 (q^4-q^3-q^2+q)/q(q-1)^2 = (q-1)^2(q+1)$. Moreover, if we fix 
the eigenvalues we get that $\left[\Rep{m,n}^{\mathrm{(C)}}(\GL{2}(\CC))_0\right] = \left[\CC^* \times \GL{2}(\CC)/U\right] = (q-1)(q+1)$.

To analyze the condition $\Phi_n(A_0) = \Phi_m(B_0)$, a straightforward computation reduces it to
$$
	\begin{pmatrix}\Phi_n(t^m) & 0 \\ \displaystyle{x\sum_{i=1}^{n-1} it^{m(i-1)}} & \Phi_n(t^m)\end{pmatrix} = \begin{pmatrix}\Phi_m(t^n) & 0 
	\\ \displaystyle{y\sum_{i=1}^{m-1} it^{n(i-1)}} & \Phi_m(t^n)\end{pmatrix}.
$$
The off-diagonal entries can be recognized as $x\Phi_n'(t^m)$ and $y\Phi_m'(t^n)$ respectively, where $\Phi_l'(x)$ denotes the formal
derivative of $\Phi_l(x)$. Since $\Phi_l$ has no repeated roots, we have that $\Phi_n'(t^m)$ and $\Phi_n(t^m)$ (resp.\ $\Phi_m'(t^n)$
and $\Phi_m(t^n)$) cannot vanish simultaneously. Therefore, stratifying according to the kernel of $\Lambda$ we get the following two possibilities:

\begin{enumerate}
\item If $t \in \Omega_{m,n}$, then $\Ker{\Lambda} = \CC^3$. We have a fibration
\begin{equation*}
 \Rep{m,n}^{\mathrm{(C)},(1)}(\GL{2}(\CC)) \longrightarrow \Omega_{m,n}
\end{equation*}
whose fiber is $\Rep{m,n}^{\mathrm{(C)}}(\GL{2}(\CC))_0$. Hence, this stratum contributes
$$
 \left[\Rep{m,n}^{\mathrm{(C)},(1)}(\AGL{2}(\CC))\right] = \left[\Rep{m,n}^{\mathrm{(C)},(1)}(\GL{2}(\CC))\right][\CC^3] = (m-1)(n-1)q^3(q-1)(q+1).
$$

\item If $t \in \CC^*-\Omega_{m,n}$, then $\Ker{\Lambda} = \CC^2$. The fibration we get is now
\begin{equation*}
	\Rep{m,n}^{\mathrm{(C)},(2)}(\GL{2}(\CC)) \longrightarrow \CC^*-\Omega_{m,n}\, .
\end{equation*}
Therefore, this stratum contributes
\begin{align*}
 \left[\Rep{m,n}^{\mathrm{(C)},(2)}(\AGL{2}(\CC))\right] &= \left[\Rep{m,n}^{\mathrm{(C)},(2)}(\GL{2}(\CC))\right][\CC^2] \\ 
 &= \left[\Rep{m,n}^{\mathrm{(C)}}(\GL{2}(\CC)) - (m-1)(n-1)\Rep{m,n}^{\mathrm{(C)},(2)}(\GL{2}(\CC))_0\right][\CC^2] \\
 &= \left((q-1)^2(q+1) - (m-1)(n-1)(q-1)(q+1)\right)q^2.
\end{align*}
\end{enumerate}

Adding up all the contributions, we get that
$$
 \left[\Rep{m,n}^{\mathrm{(C)}}(\AGL{2}(\CC))\right] = (q-1)^2(q+1)q^2 + (m-1)(n-1)(q-1)(q+1)(q^3-q^2).
$$

Putting the results of Sections \ref{sec:irred-strat}, \ref{sec:red-A}, \ref{sec:red-B} and \ref{sec:red-C} together, we prove the
second formula in Theorem \ref{thm:main}.

\section{Character varieties of torus knots}

As we have said in Section \ref{sec:rep-var}, the $G$-representation variety of a $(m,n)$-torus knot $\Rep{m,n}(G)$ parametrizes
all the representations $\rho: \pi_1(\RR^3 - K_{m,n}) \to G$. However, this space does not take into account the fact that two
representations might be isomorphic. To remove this redundancy, consider the adjoint action of $G$ on $\Rep{m,n}(G)$ given by 
$(P \cdot \rho)(\gamma) =P\rho(\gamma)P^{-1}$ for $P \in G$, $\rho \in \Rep{m,n}(G)$ and $\gamma \in \pi_1(\RR^3 - K_{m,n})$.

Ideally, we would like to take the quotient space $\Rep{m,n}(G)/G$ as the moduli space of isomorphism classes of representations.
However, typically this orbit space is not an algebraic variety, and we need to consider instead the Geometric Invariant Theory (GIT) quotient \cite{Ne}
$$
	\Char{m,n}(G) = \Rep{m,n}(G) \sslash G,
$$
usually known as the \emph{character variety}. Roughly speaking, the character variety is obtained by collapsing those orbits of
isomorphism classes of representations of the representation variety whose Zariski closures intersect. This collapsing can be
justified intuitively since those orbits are indistinguishable from the point of their structure sheaf. 

In the case that $G$ is affine (so that $\Rep{m,n}(G)$ is also an affine variety), there is a very simple description of the
GIT quotient. Let $\cO(\Rep{m,n}(G))$ be the ring of regular functions on $\Rep{m,n}(G)$ (the global sections of its structure sheaf).
The action of $G$ on $\Rep{m,n}(G)$ induces an action on $\cO(\Rep{m,n}(G))$. Set $\cO(\Rep{m,n}(G))^{G}$ for the collection
of $G$-invariant functions. By Nagata's theorem \cite{Na}{}, if $G$ is a reductive group then this is a finitely generated algebra
so we can take as the GIT quotient the algebraic variety
$$
	\Char{m,n}(G) = \Rep{m,n}(G) \sslash G = \textrm{Spec}\left(\cO(\Rep{m,n}(G))^{G}\right).
$$

This is the construction of character varieties that is customarily developed in the literature for the classical groups
$G = \GL{r}(\CC), \SL{r}(\CC)$. However, the affine case $G = \AGL{r}(\CC)$ is problematic since $\AGL{r}(\CC)$ is not a
reductive group. Roughly speaking, the underlying reason is that we have a description as semi-direct product
$\AGL{r}(\CC) = \CC^{r} \rtimes \GL{r}(\CC)$ and the factor $\CC^{r}$ is the canonical example of a non-reductive group.

For this reason, it is not guaranteed by Nagata's theorem that $\cO(\Rep{m,n}(\AGL{r}(\CC)))^{\AGL{r}(\CC)}$ is a finitely
generated algebra so the GIT quotient may not be defined as an algebraic variety. However, in this situation we have the following result.

\begin{proposition}
For any $r \geq 1$ we have that
$$
	\cO(\Rep{m,n}(\AGL{r}(\CC)))^{\AGL{r}(\CC)} = \cO(\Rep{m,n}(\GL{r}(\CC)))^{\GL{r}(\CC)}.
$$
\begin{proof}
We shall explode the natural description of $\Rep{m,n}(\GL{r}(\CC))$ as a subvariety of the whole representation variety
$\Rep{m,n}(\AGL{r}(\CC))$. By restriction, there is a natural homomorphism
$$
	\varphi: \cO(\Rep{m,n}(\AGL{r}(\CC)))^{\AGL{r}(\CC)} \longrightarrow \cO(\Rep{m,n}(\GL{r}(\CC)))^{\GL{r}(\CC)}.
$$
Notice that the action of $\AGL{r}(\CC)$ on the subvariety $\Rep{m,n}(\GL{r}(\CC))$ agrees with the $\GL{r}(\CC)$-action.
Hence, given an invariant function $f \in \cO(\Rep{m,n}(\GL{r}(\CC)))^{\GL{r}(\CC)}$ we can consider the lifting 
$\tilde{f} \in \cO(\Rep{m,n}(\AGL{r}(\CC)))^{\AGL{r}(\CC)}$ given by $\tilde{f}(A,B) = f(A_0, B_0)$ where $(A_0, B_0)$
is the vectorial part of the representation $(A,B) \in \Rep{m,n}(\AGL{r}(\CC))$. The map $f \mapsto \tilde{f}$ gives a right inverse to $\varphi$.

To show that this morphism is also a left inverse, let $(A,B) \in \Rep{m,n}(\AGL{r}(\CC))$, say
$$
	(A,B) = \left(\begin{pmatrix} 1 & 0 \\ \alpha & A_0\end{pmatrix}, \begin{pmatrix} 1 & 0 \\
	\beta & B_0\end{pmatrix}\right),
$$
with $A_0, B_0 \in \GL{r}(\CC)$ and $\alpha, \beta \in \CC^r$. Consider the homothety
$$
	P = \begin{pmatrix} 1 & 0 \\ 0 & \lambda\, \Id\end{pmatrix} \in \AGL{r}(\CC).
$$
Then, we have that
$$
	P \cdot (A,B) = \left(\begin{pmatrix} 1 & 0 \\ \lambda\alpha & A_0\end{pmatrix}, \begin{pmatrix} 1 & 0 \\
	\lambda\beta & B_0\end{pmatrix}\right).
$$
By letting $\lambda \to 0$, this implies that the Zariski closure of the orbit contains the representation
$$
	\left(\begin{pmatrix} 1 & 0 \\ 0 & A_0\end{pmatrix}, \begin{pmatrix} 1 & 0 \\
	0 & B_0\end{pmatrix}\right) \in \Rep{m,n}(\GL{r}(\CC)).
$$

Now, observe that any $\AGL{r}(\CC)$-invariant function $f: \Rep{m,n}(\AGL{r}(\CC)) \to \CC$ must take the same
value on the closure of an orbit, so for any $(A, B) \in \Rep{m,n}(\AGL{r}(\CC))$ we have that $f(A,B) = f(A_0, B_0)$.
In particular, this shows that $f \mapsto \tilde{f}$ is also a left inverse of $\varphi$, so $\varphi$ is an isomorphism.
\end{proof}
\end{proposition}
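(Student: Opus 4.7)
The plan is to exhibit an explicit inverse to the obvious restriction homomorphism
$$
\varphi: \cO(\Rep{m,n}(\AGL{r}(\CC)))^{\AGL{r}(\CC)} \longrightarrow \cO(\Rep{m,n}(\GL{r}(\CC)))^{\GL{r}(\CC)},
$$
where $\Rep{m,n}(\GL{r}(\CC))$ is identified with the closed subvariety of $\Rep{m,n}(\AGL{r}(\CC))$ consisting of representations with vanishing translation parts $\alpha=\beta=0$. This $\varphi$ is well-defined because the linear locus is stabilized exactly by the subgroup $\GL{r}(\CC) \subset \AGL{r}(\CC)$, so the $\AGL{r}(\CC)$-action restricts to the $\GL{r}(\CC)$-action there and the pullback of an $\AGL{r}(\CC)$-invariant is automatically $\GL{r}(\CC)$-invariant.

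The candidate inverse is the lifting map $f \mapsto \tilde{f}$, defined by $\tilde{f}(A,B) = f(A_0,B_0)$, where $(A_0,B_0)$ is the linear part of $(A,B)$. Because translations in $\AGL{r}(\CC)$ act trivially on the linear part while the $\GL{r}(\CC)$-factor acts by simultaneous conjugation, the $\GL{r}(\CC)$-invariance of $f$ immediately promotes to $\AGL{r}(\CC)$-invariance of $\tilde{f}$. The identity $\varphi(\tilde{f})=f$ is tautological, so the lifting is at least a right inverse.

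The substantive task is to show that lifting is also a left inverse, that is, every $\AGL{r}(\CC)$-invariant regular function $g$ must satisfy $g(A,B)=g(A_0,B_0)$. For this I would use a homothety degeneration. Conjugating by the one-parameter family $P_\lambda = \begin{pmatrix} 1 & 0 \\ 0 & \lambda\, \Id \end{pmatrix} \in \AGL{r}(\CC)$ scales the translation components $\alpha,\beta$ by $\lambda$ while preserving the linear parts; hence the limit $\lambda \to 0$ of this orbit lies in $\Rep{m,n}(\AGL{r}(\CC))$ and equals $(A_0,B_0)$ viewed as an element with zero translation parts. Since a regular invariant is constant on orbit closures, $g(A,B)=g(A_0,B_0)$, completing the argument.

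The main potential obstacle is the non-reductivity of $\AGL{r}(\CC)$, which prevents one from invoking Nagata's theorem or orbit-closure separation in the usual manner. The above degeneration circumvents this by producing the required limit point through an explicit one-parameter subgroup, so one needs only the elementary fact that a regular function constant on an orbit extends by continuity to its Zariski closure; this avoids any appeal to reductivity.
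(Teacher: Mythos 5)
Your proposal is correct and follows exactly the same strategy as the paper: the restriction map $\varphi$, the tautological lifting $f \mapsto \tilde f$ via the linear part, and the homothety degeneration $P_\lambda$ with $\lambda \to 0$ to show that the orbit closure of $(A,B)$ contains $(A_0,B_0)$, whence any invariant regular function satisfies $g(A,B)=g(A_0,B_0)$. There is no substantive difference from the paper's argument.
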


\begin{remark}
In fact, there is nothing special in considering torus knots in the previous proof. Exactly the same argument actually
proves that we have $\cO(\Rep{\G}(\AGL{r}(\CC)))^{\AGL{r}(\CC)} = \cO(\Rep{\G}(\AGL{r}(\CC)))^{\AGL{r}(\CC)}$ for the
representation variety of representations $\rho: \G\to \AGL{r}(\CC)$  for any finitely presented group $\G$.
\end{remark}

In particular, the previous proof shows that $\cO(\Rep{m,n}(\AGL{r}(\CC)))^{\AGL{r}(\CC)}$ is a finitely generated algebra,
so we can harmlessly define the $\AGL{r}(\CC)$-character variety and it satisfies 
 $$
 \Char{m,n}(\AGL{r}(\CC)) = \Char{m,n}(\GL{r}(\CC)).
 $$ 
The motive of the $\GL{r}(\CC)$-character variety has been previously computed in the literature for low rank $r$, 
for instance in \cite{Mu} for $r = 2$  (cf.\ Proposition \ref{prop:GL2}) and in \cite{MP} for $r = 3$.



\end{document}